\pdfoutput=1
\documentclass{amsart}

\usepackage{color}
\usepackage{graphicx}
\usepackage[mathcal,mathscr]{euscript}
\usepackage{bbm}
\usepackage{float}
\usepackage{amsmath,amsthm,amssymb,amsfonts,amscd,epsfig,latexsym,graphicx,textcomp}
\usepackage{tikz-cd}
\usepackage{psfrag}
\usepackage[all]{xy}
\usepackage{stackengine}
\usepackage{romannum}
\usepackage{comment}
\usepackage{hhline}
\usepackage{diagbox}
\usepackage{float}
\usepackage{caption}
\usepackage{eufrak}
\usepackage{diagbox}
\usepackage{makecell}
\usepackage{slashed}
\usepackage{MnSymbol}

\restylefloat{figure}

\newtheorem*{theorem*}{Main Theorem}

\usepackage{thmtools}
\usepackage{thm-restate}

\usepackage{hyperref}

\usepackage{cleveref}

\newtheorem{theorem}{Theorem}[section]
\newtheorem{lemma}[theorem]{Lemma}

\theoremstyle{definition}
\newtheorem{definition}[theorem]{Definition}

\theoremstyle{remark}

\numberwithin{equation}{section}

\newcommand{\PMEdgeDiag}{\raisebox{-0.33\height}{\includegraphics[scale=0.25]{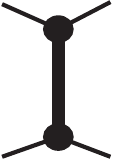}}}
\newcommand{\IIDiag}{\raisebox{-0.33\height}{\includegraphics[scale=0.25]{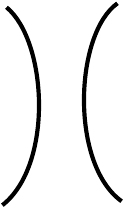}}}

\newcommand{\ZZ}{{\mathbb Z}}

\newcommand{\generalvertexbracketvertex}{\raisebox{-0.33\height}{\includegraphics[scale=0.25]{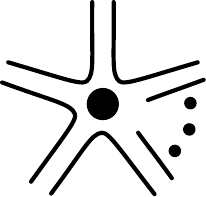}}}
\newcommand{\generalvertexbracketzero}{\raisebox{-0.33\height}{\includegraphics[scale=0.25]{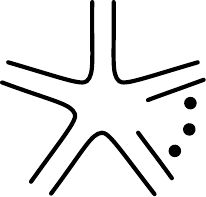}}}
\newcommand{\generalvertexbracketone}{\raisebox{-0.33\height}{\includegraphics[scale=0.25]{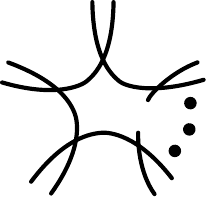}}}

\newcommand{\vertexbracketvertex}{\raisebox{-0.33\height}{\includegraphics[scale=1.2]{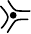}}}
\newcommand{\vertexbracketzero}{\raisebox{-0.33\height}{\includegraphics[scale=1.2]{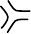}}}
\newcommand{\vertexbracketone}{\raisebox{-0.33\height}{\includegraphics[scale=1.2]{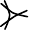}}}
 
\newcommand{\PentagonDot}{\raisebox{-0.4 \height}{\includegraphics[scale=0.25]{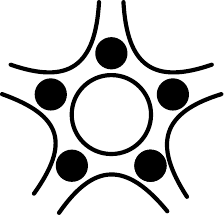}}}

\newcommand{\PentBDDot}{\raisebox{-0.4 \height}{\includegraphics[scale=0.25]{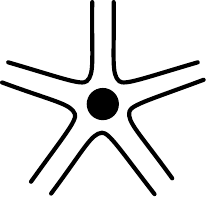}}}
\newcommand{\VStickman}{\raisebox{-0.4 \height}{\includegraphics[scale=0.25]{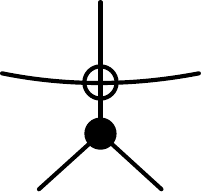}}}
\newcommand{\VStickmanDot}{\raisebox{-0.4 \height}{\includegraphics[scale=0.25]{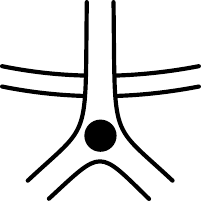}}}

\newcommand{\GrimaceDot}{\raisebox{-0.4 \height}{\includegraphics[scale=0.25]{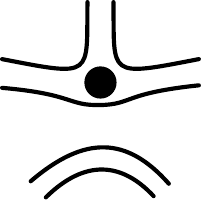}}}

\newcommand{\QuadDot}{\raisebox{-0.4 \height}{\includegraphics[scale=0.25]{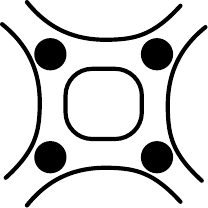}}}

\newcommand{\QuadBDDot}{\raisebox{-0.4 \height}{\includegraphics[scale=0.25]{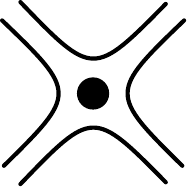}}}

\newcommand{\QuadVirt}{\raisebox{-0.4 \height}{\includegraphics[scale=0.25]{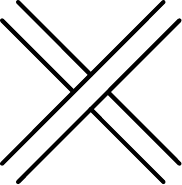}}}

\newcommand{\QuadBDZero}{\raisebox{-0.4 \height}{\includegraphics[scale=0.25]{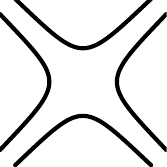}}}
\newcommand{\QuadBDOne}{\raisebox{-0.4 \height}{\includegraphics[scale=0.25]{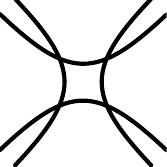}}}

\newcommand{\II}{\raisebox{-0.4 \height}{\includegraphics[scale=0.25]{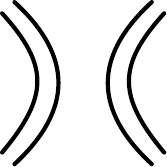}}}
\newcommand{\Equal}{\raisebox{-0.4 \height}{\includegraphics[scale=0.25]{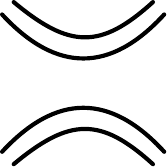}}}

\newcommand{\BubbleDot}{\raisebox{-0.4 \height}{\includegraphics[scale=0.25]{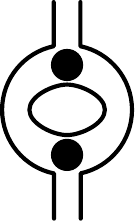}}}
\newcommand{\StraightEdge}{\raisebox{-0.4 \height}{\includegraphics[scale=0.25]{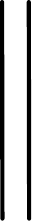}}}

\newcommand{\TriangleDot}{\raisebox{-0.4 \height}{\includegraphics[scale=0.25]{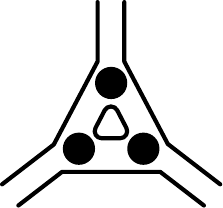}}}
\newcommand{\TriangleBDDot}{\raisebox{-0.4 \height}{\includegraphics[scale=0.25]{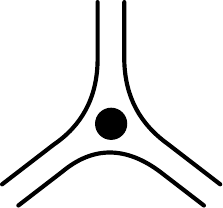}}}

\newcommand{\HeartGraph}{\raisebox{-0.4 \height}{\includegraphics[scale=0.25]{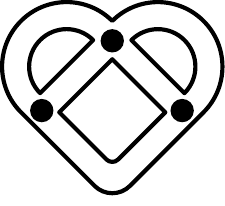}}}

\begin{document}

\title{New relations for the vertex polynomial}

\thanks{}

\author{Scott Baldridge}
\address{Department of Mathematics, Louisiana State University,
Baton Rouge, LA}
\email{baldridge@math.lsu.edu}

\author{Ben McCarty}
\address{Department of Mathematical Sciences, University of Memphis,
Memphis, TN}
\email{ben.mccarty@memphis.edu}

\subjclass{}
\date{}

\begin{abstract} 
We extend the vertex polynomial to graphs of arbitrary degree and prove local relations that hold when a graph contains a digon, triangle, quadrilateral or pentagon.
\end{abstract}

\maketitle

\pagenumbering{arabic}

\section{Introduction}
Inspired by one of Penrose's several formulas for counting $3$-edge colorings \cite{Penrose}, the vertex polynomial was introduced for trivalent graphs in \cite{BM-Quantum} using the following relations:
\begin{eqnarray*}
V\left( \vertexbracketvertex \right) &=&  V\left( \vertexbracketzero \right) \ - \ V\left(\vertexbracketone \right), \mbox{\ and} \\ [.2cm]
V\left( \bigcirc  \right) & = & n.
\end{eqnarray*}
When evaluated at $n=2$ it recovers (a multiple of) the number of Tait colorings of a bridgeless planar cubic graph.  Consequently, non-vanishing of $V(G,2)$ for every such graph is equivalent to the Four Color Theorem.  

The same state-sum construction extends to graphs of arbitrary degree.  The reason is that the zero resolutions \vertexbracketzero \ and one resolutions \vertexbracketone, make sense regardless of degree.  For example, a degree four vertex can be resolved as follows:
$$V\left( \QuadBDDot \right) =  V\left( \QuadBDZero \right) \ - \ V\left( \QuadBDOne \right).\ $$
The general case is handled similarly and defined for the first time in this paper (see \Cref{defn:VertexPenrose}).  


While the behavior of the vertex polynomial of a trivalent graph with respect to blowups was described in \cite{BM-Quantum},  no useful relations were known for digons, quadrilaterals or pentagons.  \Cref{thm:MainTheorem} supplies them, continuing in the spirit of what \cite{BM-Penrose} did for the Penrose polynomial.  The digon and the triangle relations are relatively elementary, but the quadrilateral and pentagon are more interesting. The extension of the vertex polynomial, though immediate, enables one to produce the remaining relations of \Cref{thm:MainTheorem}.  Collectively, the relations mirror the classical reducibility program for the Four Color Theorem:  every planar trivalent graph contains a face of size at most five, and so local rewriting rules for those faces are precisely the tools needed to reduce a putative minimal counterexample.

\section{Definitions}\label{sec:Defn}
We briefly recall the needed definitions here and refer the reader to \cite{BM-Quantum, BM-Color, BKR} for further details.  Given an abstract graph $G(V,E)$, a ribbon graph generalizes the notion of a plane graph in that it is an embedding $i:G\rightarrow \Gamma$ where $G$ is thought of as a $1$-dimensional CW complex and $\Gamma$ is a surface with boundary that deformation retracts onto $i(G)$.  Ribbon graphs will be represented by diagrams in the plane in which the cyclic ordering of the edges at each vertex determines an embedding.  

A perfect matching of a graph $G(V,E)$ is a subset of the non-loop edges of the graph, $M\subset E$ such that each vertex is incident to exactly one edge in the subset.  The term \emph{perfect matching graph} will refer to a choice of ribbon graph $\Gamma$ for $G$ and a perfect matching $M$.  It will be notated as $\Gamma_M$.  While there will typically be many perfect matching graphs for a given abstract graph, one may construct the \emph{blowup} of a graph, denoted $\Gamma^\flat$ by replacing every degree $r$ vertex of $\Gamma$ with an $r$-cycle (see \Cref{fig:blowup-and-vertex}).  The blowup has a canonical perfect matching given by the original edges of $\Gamma$.  We denote the canonical perfect matching graph obtained in this way by $\Gamma_E^\flat$.

\begin{figure}
\psfragscanon
\psfrag{b}{$\flat$}\psfrag{=}{$=$}
\psfrag{G}{$\Gamma_\bullet  \ = $}
\includegraphics[scale=.6]{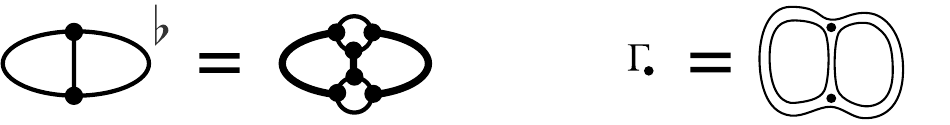}
\caption{The blowup and vertex ribbon diagram of the theta graph.} \label{fig:blowup-and-vertex}
\end{figure}

For our purposes in this paper, the {\em all-zero state} will be the result of replacing each edge $\PMEdgeDiag$ in $\Gamma^\flat_E$ with a  $\IIDiag$.  Importantly, this means that the circles of the all-zero state represent the boundaries of the faces of the original ribbon graph.  The {\em vertex ribbon diagram}, denoted $\Gamma_\bullet$, is the all-zero state of $\Gamma^\flat_E$ together with dots  placed where each vertex of the graph was before taking the all-zero resolution, i.e., a $\bullet$ is placed in the region where the three face(s) are incident to the vertex (see the right-hand side  of \Cref{fig:blowup-and-vertex}).  The inclusion of the dot allows one to tell, at a glance, whether the vertex has been resolved (into a zero or one resolution) or not.

We are now ready to define the \emph{vertex polynomial.}


\begin{definition}\label{defn:VertexPenrose}
Let $G(V,E)$ be an abstract graph with ribbon diagram $\Gamma$.  Let $\Gamma_\bullet$ be the vertex ribbon diagram of $\Gamma$.  The {\em vertex polynomial}, $V(\Gamma,n)$, is characterized by applying the following rules to the vertex ribbon diagram $\Gamma_\bullet$ for $n\in\ZZ$:
\begin{eqnarray}
V\left( \generalvertexbracketvertex \right) &=&  V\left( \generalvertexbracketzero \right) \ - \ V\left(\generalvertexbracketone \right) \label{eq:vertexP-bracket}\\ [.2cm]
\label{eq:VPimmersed_circle}
V\left( \bigcirc  \right)& = & n\\[.2cm]
V(\Gamma_1 \sqcup \Gamma_2)&=& V(\Gamma_1) \cdot V( \Gamma_2)\label{eq:vertexP-disjoint-union}
\end{eqnarray}
\end{definition}

While the polynomial in \Cref{defn:VertexPenrose} is defined recursively, it is often helpful to use the \emph{hypercube of vertex states} for computations.  We refer to the replacement of \generalvertexbracketvertex \ with \generalvertexbracketzero \ as a zero resolution and its replacement with \generalvertexbracketone \ as a one resolution.  The all-zero resolution is on the left, and column $i$ contains $i$ one resolutions.  An arrow is drawn between states that differ in a single resolution (see \Cref{fig:vertex-state-ex}).  

\begin{figure}
\includegraphics[scale=.4]{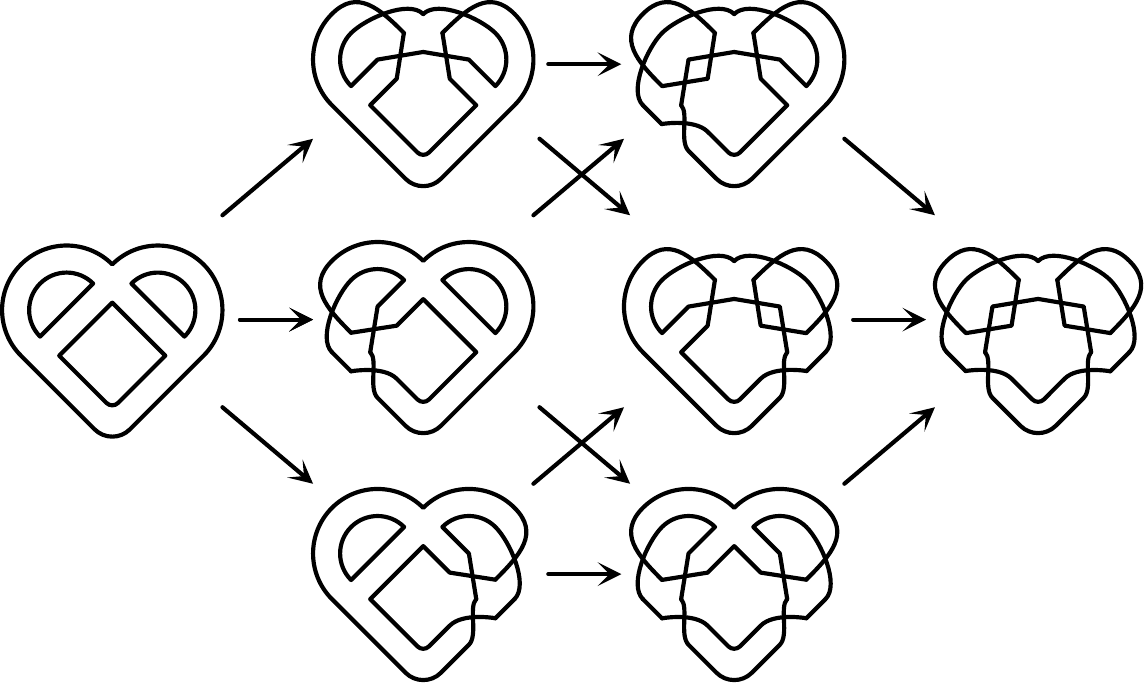}
\caption{A hypercube of vertex states.}\label{fig:vertex-state-ex}
\end{figure}

Each state of the hypercube of vertex states represents a choice of a zero or one resolution at every vertex, and it facilitates computation in that one may sum over the $2^{|V|}$ states with each state contributing $(-1)^i n^k$ where $i$ is the column in which the state resides, and $k$ is the number of immersed circles in the state.  For example, for the graph whose hypercube of vertex states is shown in \Cref{fig:vertex-state-ex} we have 
$$V\left(\HeartGraph\right) = n^4 - 3n^2+3n^2 -n^4 = 0.$$
While the polynomial is zero for this example, it is not always.  For example, the graph in \Cref{fig:blowup-and-vertex} has vertex polynomial $2n(n^2-1)$.  

\section{Main Results}
The proof of the relations comes down to a careful study of the hypercube of vertex resolutions.  The following lemma will be useful in that process in that it allows many of the states to be ignored due to the fact that their contribution to the polynomial will be canceled by other states.  

\begin{lemma}
States containing any of the configurations shown in \Cref{fig:LolliSymmetry}, where the dotted path does not contain any of the other arcs, and with the free ends joined in any manner, come in cancelling pairs, and can therefore be ignored when computing the vertex polynomial.
\label{lem:LolliDies}
\end{lemma}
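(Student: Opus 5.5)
The plan is to build a sign-reversing involution on the set of states containing one of the configurations of \Cref{fig:LolliSymmetry}. The involution pairs states lying in adjacent columns of the hypercube of vertex states and having the same number of immersed circles. Two such states contribute $(-1)^i n^k$ and $(-1)^{i\pm1} n^k$, so each pair sums to zero and the whole collection can be dropped from the state sum.

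\emph{Step 1: the local picture.} Since the figure is not reproduced here, I take the configurations to be ``lollipops'': an arc leaves a vertex region and returns to the same region along the dotted path, forming a loop attached at one resolved vertex. The hypothesis that the dotted path contains none of the other drawn arcs means this loop is a single strand and is separated from the rest of the configuration. Near that vertex there are two possibilities, the zero resolution and the one resolution. The lemma's figure presumably shows them as a symmetric pair. The involution $\iota$ changes the resolution at this one distinguished vertex and keeps every other resolution fixed. Changing exactly one resolution moves the state to an adjacent column, which produces the sign change $(-1)^i \mapsto (-1)^{i\pm 1}$.

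\emph{Step 2: the number of circles is unchanged.} I will trace the strands through the local picture. Label the free ends $a,b,\dots$ and the two ends of the dotted path $p,q$. In both resolutions the loop-closing strand through $p,q$ is used in the same way. Because the dotted path returns to the same vertex, the two resolutions differ only in the order in which the free ends are connected to one another through the loop. Under either resolution, the pairing induced on the free ends (outside the loop) is the same. The external completion, meaning how the free ends are joined, is also the same. Therefore the global circles agree, except possibly for whether the loop closes off its own circle. I will check this directly in each pictured configuration: in both resolutions the loop either is absorbed into the same external circle or forms no separate circle. Hence $k$ is preserved. This is the step the hypothesis ``the dotted path does not contain any of the other arcs'' is designed for: without it, the dotted path could meet the other strands and change how circles merge.

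\emph{Step 3: $\iota$ is a well-defined involution.} If a state contains several such configurations, I need a canonical choice of which vertex to flip. I will fix an ordering of the vertices and flip the smallest vertex that carries a configuration. I must then check that flipping it keeps the configuration at that vertex, which holds because the figure lists both resolutions. I must also check that the flip neither creates nor destroys a configuration at any smaller vertex. This should hold because the configuration at a vertex depends only on the strand connectivity, which Step 2 shows is preserved by the flip.

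I expect the main obstacle to be Step 2, specifically the cases where the free ends are joined so that the loop's strand becomes part of a circle passing through the other free ends. The bookkeeping of which arcs lie on common circles has to be done carefully there. I would handle it by reducing to an equality of the two induced perfect pairings on the boundary points of a disk containing the configuration, and then arguing that the number of circles depends only on that pairing together with the fixed exterior.
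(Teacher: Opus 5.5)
Your Steps 1--2 are essentially the paper's whole argument: a local check that the two pictured resolutions at $z$, completed identically outside, have the same number of circles and opposite parity. Done explicitly, take the ends at $z$ in cyclic order $e^-,e^+,p^-,p^+,q^-,q^+$, with $e^+,e^-$ joined by the dotted path $D_z$. The zero resolution $e^+p^-,\,p^+q^-,\,q^+e^-$ joins $\{p^-,q^+\}$ through $D_z$ and $\{p^+,q^-\}$ directly. The one resolution $e^-p^+,\,e^+q^-,\,p^-q^+$ joins $\{p^+,q^-\}$ through $D_z$ and $\{p^-,q^+\}$ directly. The induced pairing, and hence the circle count, agrees. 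Contrary to your Step 2, however, $D_z$ does not stay on ``the same external circle''; it moves to the other connection. This computation also shows that the hypothesis you need is that the joined ends are the two strands of a single edge at a trivalent vertex. ``Returns to the same vertex'' is not enough.

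That migration is what breaks Step 3, which the paper never attempts. Any vertex $z'$ with an arc on $D_z$ sees its outside connectivity change when $z$ is flipped. So configurations at other vertices can be created or destroyed, and ``flip the smallest configured vertex'' need not be an involution.

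In fact, reading the configuration as a lollipop (as you do), no such pairing exists in general. Let $G$ be the planar cubic graph obtained from a $4$-cycle $z\,w\,z'\,w'$ by doubling $zw$ and $z'w'$.
\begin{itemize}
\item A mixed resolution of the digon $\{z,w\}$ caps both edges leaving it. This creates the configuration at $z'$ and at $w'$, with dotted path through the digon; the same holds with the two digons exchanged.
\item One computes $V(G)=4n^4-4n^2$, as relation (1) of \Cref{thm:MainTheorem} together with $V(\theta)=2n(n^2-1)$ predicts.
\item The configuration-free states are exactly the four with $r_z=r_w$ and $r_{z'}=r_{w'}$, each contributing $+n^4$.
\end{itemize}
So the configured states contribute $-4n^2\neq 0$, and the lemma in the global form you are proving is false. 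Your rule breaks exactly here: flipping $z'$ in a state where only $\{z,w\}$ is mixed makes $\{z',w'\}$ mixed, which creates configurations at the smaller vertices $z$ and $w$.

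What is true, and what the proof of \Cref{thm:MainTheorem} actually needs, is the fixed-vertex version. For a fixed vertex $z$, the set of states carrying the configuration at $z$ is closed under flipping $z$, because $D_z$ avoids the arcs at $z$. By your Steps 1--2, that set therefore contributes zero. Applications must discard families of states that each carry the configuration at one prescribed vertex, not the union of all configured states. For the digon, the mixed states all carry it at the far endpoint of an emanating edge, so this works.
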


\begin{figure}[H]
\includegraphics[scale=.5]{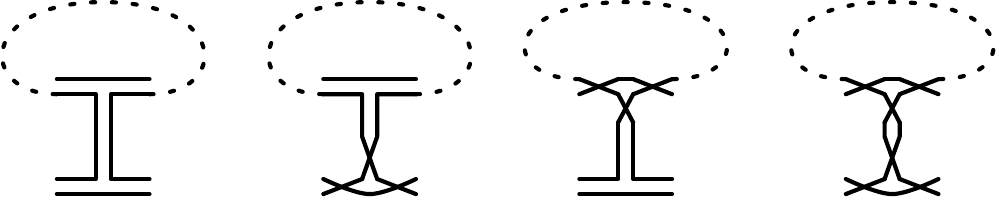}
\caption{States containing this configuration do not contribute to the vertex polynomial.}
\label{fig:LolliSymmetry}
\end{figure}

\begin{proof}
The two configurations on the left side of \Cref{fig:LolliSymmetry} form a cancelling pair:  joining the the free ends with arcs in any manner results in two configurations with the same number of circles, and different parity in the number of vertex one resolutions. The argument is similar for the two configurations on the right.
\end{proof}

We now come to the main result of this paper:  a set of relations for the digon, triangle, quadrilateral and the pentagon.

\begin{theorem}
\label{thm:MainTheorem}
Given that the edges emanating from each configuration on the left are unique, and $R^i$ represents a rotation of the configuration by $2\pi i/5$ radians, the vertex polynomial satisfies the following relations:
\begin{enumerate}
\item $V\left( \BubbleDot \right) =  2n V\left( \StraightEdge \right),$

\item $V\left( \TriangleDot \right) =  n V\left( \TriangleBDDot \right),$

\item $V\left( \QuadDot \right) =  n V\left( \QuadBDZero \right) \ + \ n V\left( \QuadBDOne \right)\ + \ 2 V\left(\II \right)\ + \ 2 V\left( \Equal \right) + 2 V\left(\QuadVirt\right),$

\item $V\left( \PentagonDot \right) =  n V\left( \PentBDDot \right) \ + \ \sum_{i=0}^4 V\left(R^i \left(\VStickmanDot \right) \right)\ + \ \sum_{i=0}^4 V\left(R^i \left(\GrimaceDot \right) \right).$
\end{enumerate}

\end{theorem}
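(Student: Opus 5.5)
I would prove each relation by expanding the dotted vertices of the face on the left-hand side via the bracket relation \eqref{eq:vertexP-bracket}, leaving every vertex outside the face unresolved. This writes $V$ of the left side as a signed sum over the $2^k$ local states of the $k$ vertices of the face ($k=2,3,4,5$). Each local state is a tangle with $2k$ free ends, namely the two strands of each emanating edge, possibly together with closed circles lying entirely inside the face. Since the rest of the diagram is common to all terms, two local states with the same boundary connectivity and the same number of interior circles contribute identically up to sign. The proof therefore reduces to three steps: list the $2^k$ local states, discard cancelling pairs using \Cref{lem:LolliDies}, and group the survivors by boundary connectivity, matching each group with a term on the right.

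\textbf{Digon and triangle.} For the digon, the zero--zero state contains the inner face circle. Its outer strands reconnect exactly as a straight edge's vertex ribbon picture, so it contributes $n$. The one--one state, I expect, produces the same connectivity with a circle, but with sign $(+1)$, giving another $n$. The two mixed states have the lollipop configuration of \Cref{lem:LolliDies} and cancel. This yields $2n$.

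For the triangle, the all-zero state gives the interior face circle times the connectivity of a single degree-three vertex left in zero resolution. I would then check that the remaining seven states reorganize into $n$ times the vertex's one resolution, with the other states cancelling in lollipop pairs. Together these give $nV$ of the contracted picture with a dot. The key verification is that the $\pm$ signs match the bracket at the new dot.

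\textbf{Quadrilateral and pentagon.} Here I would draw the full cube: 16 states for the quadrilateral, 32 for the pentagon. First I apply \Cref{lem:LolliDies} to kill every state in which some resolved vertex forms a lollipop with an adjacent face arc. Typically these are states where adjacent vertices of the face take different resolutions in a particular pattern. The surviving states I then sort by the perfect matching they induce on the $2k$ boundary ends.

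For the quadrilateral, the states with an interior circle should give the factor $n$ times the two resolutions of the degree-four dot. The circle-free survivors should come in pairs of equal sign realizing the three remaining connectivities $\II$, $\Equal$ and the virtual one, each with coefficient $2$. For the pentagon, the states with an interior circle combine to $n$ times the degree-five dot. The circle-free survivors should fall into two $5$-fold rotation orbits, matching the $R^i$ terms. Rotation symmetry reduces the bookkeeping to one representative per orbit, and I would also use the hypothesis that the emanating edges are distinct so that no boundary strands are secretly identified.

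\textbf{Main obstacle.} The hardest step is the pentagon's case analysis. Its 32 states must be partitioned exactly: some into lollipop cancellations, some into the $n\cdot$(degree-five vertex) term, where I must verify that all $2^5$ expansions of the new dot are reproduced with correct signs, and the rest into the two rotation orbits. Sign bookkeeping is most delicate where two states with equal connectivity might cancel rather than add.

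My first attempt would be to organize states by the cyclic pattern of zeros and ones around the face, up to rotation. That gives the necklace classes $00000$, $10000$, $11000$, $10100$, $11100$, $11010$, $11110$, $11111$, and I would treat each class once.
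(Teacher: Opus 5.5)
Your overall route is the paper's: expand the face vertices, discard the states covered by \Cref{lem:LolliDies}, and regroup the survivors. The gap is in how you apply the Lemma. The discarded states do \emph{not} cancel among the $2^k$ local states, and the relevant lollipop is not at a face vertex.

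In the digon, each mixed state has exactly one one-resolution, so both have sign $-1$. They also have the same connectivity: each joins the two strands of $x$ to each other, and likewise for $y$. So they add, giving $-2$ times the diagram with both emanating edges capped off. In the quadrilateral, the eight discarded states are exactly those with an odd number of one-resolutions, again all of sign $-1$.

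What kills such a state is that a capped emanating edge $x$ forms a lollipop at its \emph{far} endpoint $w$, a vertex off the face. The partner supplied by the Lemma differs from the state only in the resolution at $w$. So you cannot keep every outside vertex unresolved and look for lollipops at resolved face vertices. You must use that the dot at $w$ with one edge capped has equal zero and one resolutions, which holds when $w$ is trivalent. This requires $w$ to be a separate vertex whose other arcs the capping path avoids. That is the real role of the hypothesis that the emanating edges are distinct, and the local claim fails without it. View the theta graph as a digon whose two emanating half-edges lie on the same edge. Each mixed state then has one circle, so $V=2n^3-2n=2n(n^2-1)$, the value quoted in the paper, rather than $2n\cdot n^2$.

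Once this is repaired your plan works, but two pieces of bookkeeping need correcting. First, the $n$-terms come only from the all-zero and all-one states, with signs $+1$ and $(-1)^k$. This gives $n$ times the bracket of the new dot for $k=3,5$, and $n$ times the sum of the two resolutions for $k=2,4$. A degree-five dot has two resolutions, not ``$2^5$ expansions''.

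Second, the remaining survivors pair each state with its complement, obtained by switching every resolution on the face. In the quadrilateral, such a pair has equal signs and equal connectivity, which produces the coefficients $2$. In the pentagon, take the state whose one-resolutions are at $\{j,j'\}$ and its complement. These have opposite signs and \emph{different} connectivities. Both join $x_j$ to $x_{j'}$, but the other three emanating edges are joined as in a zero resolution in one and as in a one resolution in the other. So they do not cancel; read backwards through the bracket, they assemble into a new trivalent dot. In your necklace notation, $11000$ pairs with $11100$ (the planar orbit) and $10100$ pairs with $11010$ (the non-planar orbit).
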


\begin{proof}
The second relation was already proven in \cite{BM-Quantum}.  For the first relation, any state that has a zero smoothing on top, and a one smoothing on the bottom, or vice-versa, is one of the configurations in \Cref{lem:LolliDies}, and therefore contributes zero to the vertex polynomial.  The remaining states with two zero smoothings on the configuration or two one smoothings are topologically identical, and match a single band, with an extra circle, which proves the relation.

The third and fourth relations are handled in a similar manner:  from the cube of resolutions, remove any state where \Cref{lem:LolliDies} applies. What remains is a collection of states that naturally pair up to correspond to the configurations on the right, sometimes with an extra circle, which comes out as a factor of $n$.  For example, the two states at the top of \Cref{fig:PentExample} appear in the hypercube of states for the pentagon, with different parity (i.e. sign).  The circle data is clearly identical to the configurations shown at the bottom, as is the parity. The sub-hypercubes associated with these two pictures can be replaced with $R^4 \left( \VStickmanDot \right)$, as shown in the relation.  The other pairings produce the remaining terms on the right-hand side of the relation.

\begin{figure}
\includegraphics[scale=.4]{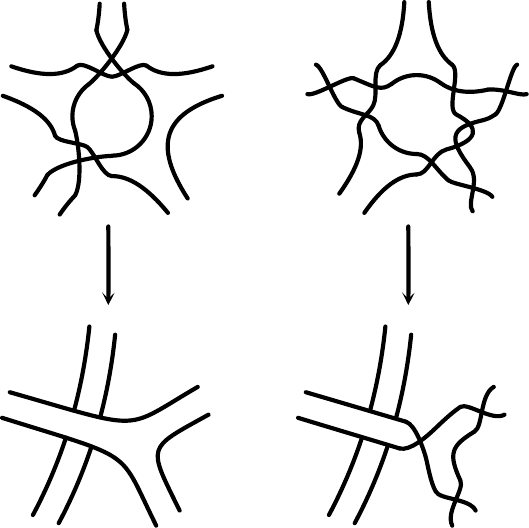}
\caption{Two states for the pentagon configuration, and their associated configurations.}
\label{fig:PentExample}
\end{figure}

\end{proof}

\section{Conclusion}

For trivalent graphs it was shown in \cite{BM-Quantum} (and it really goes back to \cite{Penrose}) that positivity of the vertex polynomial at $n = 2$ for every bridgeless planar cubic graph is equivalent to the Four Color Theorem.  Going back to Kempe's own failed proof we already know that a minimal counterexample cannot contain a quadrilateral face (or smaller).  Our pentagon relation in \Cref{thm:MainTheorem} might look promising initially since it rewrites the vertex polynomial of a trivalent graph containing a pentagon in terms of smaller configurations.   However, the terms involving the five rotated nonplanar configurations \VStickman \ may have vertex polynomials that are negative, zero or positive; the relation only asserts that the sum equals the left-hand side. It is therefore still possible for both sides to vanish. This is the precise combinatorial obstruction that remains—the same obstruction that has historically blocked reducibility arguments at the pentagon.

Whether the non-planar summands that appear can be controlled, or whether a different set of planar-preserving pentagon identities exists, is left open.



\begin{thebibliography}{99}



\bibitem{BKR} S. Baldridge, L. Kauffman, and W. Rushworth, {\em On ribbon graphs and virtual links}, European Journal of Combinatorics {\bf 103}, June 2022, doi: 10.1016/j.ejc.2022.103520, arXiv: 2010.04238.


\bibitem{BM-Color} S. Baldridge and B. McCarty, {\em A topological quantum field theory approach to graph coloring}, arXiv:2303.12010.


\bibitem{BM-Quantum} S. Baldridge and B. McCarty, {\em Quantum state systems that count perfect matchings}, arXiv:2401.07939.

\bibitem{BM-Penrose} S. Baldridge and B. McCarty, {\em New relations for the Penrose polynomial}, arXiv:2606.06643.












\bibitem{Penrose} R. Penrose, ``Applications of negative dimensional tensors,'' in {\em Combinatorial Mathematics and Its Applications}, Academic Press (1971).




\end{thebibliography}
\end{document}